\newtheorem{theorem}{Theorem}[section]
\newtheorem{lemma}[theorem]{Lemma}
\newtheorem{proposition}{Proposition}
\theoremstyle{definition}
\newtheorem{remark}{Remark}
\newcommand\RR{\ensuremath{\mathbb{R}}}
\newcommand\NN{\ensuremath{\mathbb{N}}}
\newcommand{\PP}{\ensuremath{\mathbf{P}}}
\newcommand{\OFP}{\ensuremath{(\Omega,\mathcal{F},\mathbf{P})}}
\newcommand{\norm}[1]{\ensuremath{\lVert#1\rVert}}
\DeclareMathOperator{\diag}{diag}
\DeclareMathOperator{\lnminus}{ln^{--}}
\DeclareMathOperator{\lnplus}{ln^{+}}
\begin{document}

\title{Averaging in Random Systems of Nonnegative Matrices}

\author{Janusz Mierczy\'nski
\\
Institute of Mathematics and Computer Science
\\
Wroc{\l}aw University of Technology
}
\date{}

\footnotetext{This is a pre-copy-editing, author-produced PDF of an article accepted for publication in \emph{DCDS Supplements} following peer review. The definitive publisher-authenticated version in: \emph{Dynamical Systems and Differential Equations, AIMS Proceedings 2015, Proceedings of the 10th AIMS International Conference (Madrid, Spain)}, pp. 835--840, is available online at: http://dx.doi.org/10.3934/proc.2015.0835. }

\maketitle

\begin{abstract}
It is proved that for the top Lyapunov exponent of a random matrix system of the form $\{A D(\omega)\}$, where $A$ is a nonnegative matrix and $D(\omega)$ is a diagonal matrix with positive diagonal entries, is bounded from below by the top Lyapunov exponent of the averaged system.  This is in contrast to what one should expect of systems describing biological metapopulations.
\end{abstract}

\section{Introduction}

We assume throughout that $\OFP$ is a probability space:  $\mathcal{F}$ is a $\sigma$\nobreakdash-\hspace{0pt}algebra of subsets of $\Omega$, and $\mathbf{P}$ is a probability measure defined on $\mathcal{F}$.

Let $\theta \colon \Omega \to \Omega$ be an automorphism of the probability space $\OFP$. We assume that $\theta$ is ergodic:  for any $\Omega' \in \mathcal{F}$ such that $\theta(\Omega') = \Omega'$ either $\PP(\Omega') = 1$ or $\PP(\Omega') = 0$.

\smallskip
By a {\em random matrix system\/} we understand a measurable family $S \colon \Omega \to \RR^{N \times N}$ of linear endomorphisms of $\RR^N$ (identified with $N$ by $N$ real matrices),
\begin{equation}
\label{eq:RMS}
\{ S(\omega) \}_{\omega \in \Omega}.
\end{equation}

A random matrix system gives rise to a discrete\nobreakdash-\hspace{0pt}time linear (skew\nobreakdash-\hspace{0pt}product) random (semi)dynamical system on $\Omega \times \RR^N$ consisting of iterates of the vector bundle morphism
\begin{equation*}
(\omega, u) \mapsto (\theta\omega, S(\omega)u), \quad \omega \in \Omega, \ u \in \RR^N.
\end{equation*}
Its $n$\nobreakdash-\hspace{0pt}th iterate, $n = 1, 2, 3, \dots$, has the form
\begin{equation*}
(\omega, u) \mapsto (\theta^{n}\omega, S^{(n)}(\omega)u), \quad \omega \in \Omega, \ u \in \RR^N,
\end{equation*}
where
\begin{equation*}
S^{(n)}(\omega) := S(\theta^{n-1}\omega) \dots S(\theta \omega) S(\omega), \quad n = 1, 2, 3, \dots, \ \omega \in \Omega.
\end{equation*}
The following result is a part of the Furstenberg--Kesten theorem (see, e.g., \cite[Thm.\ 3.3.3]{Arn}).
\begin{proposition}
\label{prop:top-Lyapunov}
For a random matrix system \eqref{eq:RMS} assume that the mapping $\lnplus{\norm{S(\cdot)}}$ belongs to $L_1\OFP$.  Then there exists $\lambda \in [-\infty, \infty)$ such that for a.e.\ $\omega \in \Omega$ the equality
\begin{equation}
\label{eq:Kingman1}
\lambda = \lim\limits_{n \to \infty} \frac{\ln{\norm{S^{(n)}(\omega)}}}{n}.
\end{equation}
Moreover,
\begin{equation}
\label{eq:Kingman2}
\lambda = \inf\limits_{n \in \NN} \frac{1}{n} \int\limits_{\Omega} \ln{\norm{S^{(n)}(\cdot)}} \, d\mathbf{P}(\cdot).
\end{equation}
\end{proposition}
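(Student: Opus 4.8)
The plan is to reduce the statement to Kingman's subadditive ergodic theorem by exhibiting $f_{n}(\omega):=\ln\norm{S^{(n)}(\omega)}$ as a subadditive cocycle over the ergodic automorphism $\theta$; indeed the assertion is, up to the ergodicity reduction, essentially \cite[Thm.\ 3.3.3]{Arn}, so the argument below is just the verification of hypotheses.

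First I would record the cocycle relation $S^{(n+m)}(\omega)=S^{(m)}(\theta^{n}\omega)\,S^{(n)}(\omega)$, immediate from the definition of $S^{(n)}$, together with the measurability of $\omega\mapsto S^{(n)}(\omega)$ and hence of $f_{n}$. Submultiplicativity of the operator norm then gives $\norm{S^{(n+m)}(\omega)}\le\norm{S^{(m)}(\theta^{n}\omega)}\,\norm{S^{(n)}(\omega)}$, so after taking logarithms
\[
  f_{n+m}(\omega)\le f_{n}(\omega)+f_{m}(\theta^{n}\omega),\qquad \omega\in\Omega,\ n,m\in\NN .
\]
Next I would iterate this with $m=1$ to obtain $f_{n}(\omega)\le\sum_{k=0}^{n-1}f_{1}(\theta^{k}\omega)$, whence $f_{n}^{+}\le\sum_{k=0}^{n-1}f_{1}^{+}\circ\theta^{k}$; since $f_{1}^{+}=\lnplus\norm{S(\cdot)}\in L_{1}\OFP$ by assumption and $\theta$ preserves $\mathbf{P}$, each summand is integrable, so $f_{n}^{+}\in L_{1}\OFP$ for every $n$. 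Therefore $\int_{\Omega}f_{n}\,d\mathbf{P}$ is well defined as an element of $[-\infty,\infty)$, and integrating the same pointwise bound yields $\frac1n\int_{\Omega}f_{n}\,d\mathbf{P}\le\int_{\Omega}f_{1}\,d\mathbf{P}<\infty$.

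With subadditivity and integrability of the positive parts in hand, Kingman's subadditive ergodic theorem gives a $\theta$-invariant measurable function equal a.e.\ to $\lim_{n\to\infty}f_{n}(\omega)/n$; ergodicity of $\theta$ collapses it to a constant $\lambda\in[-\infty,\infty)$, which is \eqref{eq:Kingman1}. Finally, because $\theta$ is measure preserving the numerical sequence $a_{n}:=\int_{\Omega}f_{n}\,d\mathbf{P}$ is itself subadditive, so Fekete's lemma gives $\lim_{n}a_{n}/n=\inf_{n}a_{n}/n$, and Kingman's theorem identifies this common value with $\lambda$, which is \eqref{eq:Kingman2}. I do not expect any serious obstacle here: the only point needing care is the bookkeeping with possibly $-\infty$-valued integrals (and the corresponding version of Fekete's lemma), while the one genuinely deep ingredient --- the almost sure convergence in the subadditive ergodic theorem --- is used as a black box.
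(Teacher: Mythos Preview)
Your argument is correct: exhibiting $f_n=\ln\norm{S^{(n)}}$ as a subadditive process with $f_1^+\in L_1$, invoking Kingman's theorem for the a.e.\ limit, and then Fekete's lemma for the infimum formula is exactly the standard route to the Furstenberg--Kesten theorem, and you have handled the bookkeeping (integrability of $f_n^+$, possibly $-\infty$-valued integrals, ergodicity forcing the limit to be constant) correctly.

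As for comparison with the paper: the paper does \emph{not} prove this proposition at all. It is stated as a quotation of a known result, with the sentence ``The following result is a part of the Furstenberg--Kesten theorem (see, e.g., \cite[Thm.\ 3.3.3]{Arn})'' immediately preceding it and no proof following. So there is nothing to compare your approach against; you have supplied a (correct, standard) proof where the paper simply cites the literature.
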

(Here and in the sequel $\norm{\cdot}$ denotes the Euclidean matrix or vector norm, depending on the context.)

We will call $\lambda$ as above the {\em top Lyapunov exponent\/} of the random matrix system~\eqref{eq:RMS}.

\bigskip
In the present paper we consider random matrix systems of a special form, namely such that $S(\omega) = A D(\omega)$, where $A$ is a constant (that is, independent of $\omega$) matrix with nonnegative entries and $D(\omega)$ is a diagonal matrix with positive diagonal entries.

Such random matrix systems occur in modeling so\nobreakdash-\hspace{0pt}called metapopulations, that is, populations in which individuals live in $N$ spatially separated patches (see, e.g., \cite{Schreib}).  Here $u_i$, $1 \le i \le N$, is the number of individuals in patch $i$, $d_{i}$ is the fitness of an individual in patch $i$, and $a_{ij}$ is, for $i \ne j$, the fraction of the population from patch $j$ that disperse to patch $i$.

The top Lyapunov exponent measures the overall fitness of the metapopulation:  the larger it is the more viable the (meta)population should be.  Indeed, if $A$ is a primitive matrix (meaning that some of its powers has all entries positive) then the logarithmic growth rate of iterates of any positive vector equals the top Lyapunov exponent.

It is an important subject in population dynamics to analyze the influence of seasonal variations on the fitness.  To quote Sebastian J. Schreiber~\cite{Schreib}:
\begin{quote}
Temporal fluctuations in environmental conditions can lead to fluctuations in population growth rates. For a given mean population growth rate, one expects that
extinction risk increases with temporal variation in the growth rates.
\end{quote}
Let us look at the mathematical interpretation of the above statement in the language of random matrix systems of the form $S(\omega) = A D(\omega)$.  As the dynamical system generated by $\theta$ on the base space is ergodic, for each patch $i$, $1 \le i \le N$, Birkhoff's ergodic theorem states that for a.e.\ $\omega \in \Omega$ the limit
\begin{equation*}
\lim\limits_{n \to \infty} \frac{1}{n} \sum\limits_{k=0}^{n-1} \ln{d_i(\theta^{k}\omega)}
\end{equation*}
exists and equals the expected value
\begin{equation*}
\int\limits_{\Omega} \ln{d_i(\cdot)}\, d\PP(\cdot),
\end{equation*}
which can be interpreted as the mean population growth rate in isolated patch $i$.   As dispersal rates are independent of time, one compares the top Lyapunov exponent of the original system with the top Lyapunov exponent of the system with the population growth rate in each patch $i$ replaced by its geometric mean.  The latter Lyapunov exponent equals just the logarithm of the spectral radius of $A \bar{D}$, where $\bar{D}$ is the diagonal matrix obtained by taking the geometric means of the entries of $D$.  Therefore, our expectations should be that the top Lyapunov exponent of the system with temporal variation is not larger than the metapopulation growth rate for the averaged growth rates in all patches.

However, our Theorem~\ref{thm:main} shows that the reverse is true.

\medskip
The paper is organized as follows.  In Section~\ref{section:main} the main concepts are introduced and Theorem~\ref{thm:main} is formulated.  In Section~\ref{section:positive} we give a proof of Theorem~\ref{thm:main} under the assumption that $A$ has all entries positive.  Section~\ref{section:general_nonnegative} deals with a general case.

Vectors [matrices] with nonnegative (resp.\ positive) coordinates [entries] will be refereed to as {\em nonnegative\/} (resp.\ {\em positive\/}) vectors [matrices].

\section{Main concepts}
\label{section:main}
Assume that $A$ is an $N \times N$ nonnegative matrix.

Further, let $D \colon \Omega \to \RR^{N \times N}$ be a measurable matrix function satisfying:
\begin{enumerate}
\item[(A1)]
For each $\omega \in \Omega$, $D(\omega) = \diag(d_{1}(\omega), \dots, d_{N}(\omega))$ with $d_{i}(\omega) > 0$, $1 \le i \le N$;
\item[(A2)]
$\lnplus{\max\limits_{i}d_{i}(\cdot)}$ belongs to $L_1\OFP$.
\end{enumerate}

We consider random matrix systems of the form
\begin{equation}
\label{eq:RMS-D}
\{ A D(\omega) \}_{\omega \in \Omega}.
\end{equation}
We thus have $s_{ij}(\omega) = a_{ij} d_{j}(\omega)$ for all $1 \le i, j \le N$, $\omega \in \Omega$.

As we will be using some results from~\cite{JMSh2}, we introduce here some auxiliary functions from that paper, as well as their properties:
\begin{equation}
\label{eq:mM}
\begin{aligned}
m_{c,i}(\omega) & := \min\limits_{1 \le j \le N} s_{ji}(\omega) = d_{i}(\omega) \cdot \min\limits_{1 \le j \le N} a_{ji},
\\
M_{c,i}(\omega) & := \max\limits_{1 \le j \le N} s_{ji}(\omega) = d_{i}(\omega) \cdot \max\limits_{1 \le j \le N} a_{ji},
\\
M(\omega) & := \max\limits_{1 \le i, j \le N} s_{ij}(\omega) \le \max\limits_{1 \le i \le N} d_{i}(\omega) \cdot \max\limits_{1 \le i, j \le N} a_{ij}.
\end{aligned}
\end{equation}

For $1 \le i \le N$ define $\bar{d}_i$ by
\begin{equation*}
\ln{\bar{d}_i} = \int\limits_{\Omega} \ln{d_i(\cdot)} \, d\mathbf{P}(\cdot),
\end{equation*}
where $\ln{0} = -\infty$, $e^{-\infty} = 0$.  As a consequence of (A2), $\bar{d}_i \in [0, \infty)$.  Let $\bar{D}$ stand for the diagonal matrix $\diag(\bar{d}_1, \dots, \bar{d}_N)$.

The matrix $A \bar{D}$ is nonnegative, so, by the Frobenius--Perron theorem, see~\cite[Thm.\ 1.3.2]{BP}, its spectral radius is an eigenvalue such that an eigenvector corresponding to it can be chosen nonnegative.

The following is the main result of the paper.
\begin{theorem}[Main Theorem]
\label{thm:main}
Under (A1)--(A2) the top Lyapunov exponent of~\eqref{eq:RMS-D} is bounded from below by the logarithm of the spectral radius of $A \bar{D}$.
\end{theorem}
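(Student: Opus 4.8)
My plan is to bound the top Lyapunov exponent $\lambda$ from below using the variational formula \eqref{eq:Kingman2}, which gives $\lambda = \inf_{n} \frac{1}{n}\int_{\Omega} \ln\norm{S^{(n)}(\cdot)}\, d\PP$. In fact it is cleaner to work with a single fixed test vector: since for a primitive/positive matrix the growth of any positive vector realizes the top exponent, I expect the key inequality to take the form $\lambda \ge \limsup_{n} \frac{1}{n}\ln\norm{S^{(n)}(\omega)v}$ for a suitable nonnegative $v$ and a.e.\ $\omega$, and conversely $\frac{1}{n}\int \ln\norm{S^{(n)}(\cdot)v}\,d\PP \le \frac{1}{n}\int \ln\norm{S^{(n)}(\cdot)}\,d\PP + \text{const}$. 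The natural candidate for $v$ is the nonnegative Frobenius--Perron eigenvector $v$ of $A\bar{D}$, with eigenvalue $\rho := \rho(A\bar D)$, so $A\bar D v = \rho v$. The goal then reduces to showing $\int_{\Omega} \ln\norm{S^{(n)}(\cdot)v}\,d\PP \ge n\ln\rho + O(1)$, or more precisely that $\frac{1}{n}\int \ln\norm{S^{(n)}(\cdot)v}\,d\PP \to \ln\rho$ is prevented from below — I want the $\ge$ direction.

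The heart of the argument is a concavity/convexity exchange via Jensen's inequality. Write $S^{(n)}(\omega)v = A D(\theta^{n-1}\omega) A D(\theta^{n-2}\omega)\cdots A D(\omega)\, v$. Taking logarithms of coordinates and using that $\ln$ of a sum of nonnegative terms is bounded below in a manner controlled by the individual logarithms, I would track the quantity $\ln$ applied coordinatewise. The crucial point: the map $D \mapsto \ln(A D w)_i$ (coordinatewise log of the image), as a function of $(\ln d_1,\dots,\ln d_N)$, is \emph{concave} — because $\ln(\sum_j a_{ij} e^{x_j} w_j)$ is convex in $x$, so I actually want the reverse. Let me restate: the log-sum-exp function $x \mapsto \ln\sum_j c_j e^{x_j}$ is convex, hence by Jensen $\int \ln\sum_j c_j e^{x_j(\omega)}\,d\PP \ge \ln\sum_j c_j e^{\int x_j\,d\PP}$ — wait, convexity gives $\le$. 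The correct tool is: for a \emph{fixed} positive vector and the substitution stochasticity, one uses that $\ln$ is concave, so $\int \ln(\text{linear in } d)\,d\PP$... The right mechanism, following \cite{JMSh2}, is that averaging the $d_i$ geometrically and using concavity of $\ln$ in the \emph{multiplicative} structure yields, for the Perron eigenvector $v$ of $A\bar D$: a.e.\ and for all $n$, a lower bound $\norm{S^{(n)}(\omega)v} \ge (\text{fluctuating factor})\cdot \rho^n$ whose fluctuating part has zero Birkhoff average by the ergodic theorem. I would make this precise by induction on $n$, propagating an estimate of the form $S^{(n)}(\omega)v \ge \big(\prod_{k=0}^{n-1} c(\theta^k\omega)\big)\rho^n v$ coordinatewise, where $\ln c$ is integrable with $\int \ln c\,d\PP = 0$; then $\frac{1}{n}\ln\norm{S^{(n)}(\omega)v} \ge \ln\rho + \frac{1}{n}\sum_{k=0}^{n-1}\ln c(\theta^k\omega) + \frac{1}{n}\ln\norm{v} \to \ln\rho$ by Birkhoff, and since this limsup is $\le \lambda$ we are done.

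For the positive case (Section~\ref{section:positive}), the induction closes cleanly because $A$ positive means $A D(\omega) w$ has all coordinates comparable, so one can extract a scalar lower factor $c(\omega)$ depending only on $\min_i d_i(\omega)$, $\max_i d_i(\omega)$ and the entries of $A$ — this is exactly where the auxiliary functions $m_{c,i}, M_{c,i}, M$ of \eqref{eq:mM} enter — and (A2) guarantees the integrability needed to apply Birkhoff. The general nonnegative case (Section~\ref{section:general_nonnegative}) is where I expect the real obstacle: the Perron eigenvector $v$ of $A\bar D$ may have zero coordinates, $A\bar D$ need not be primitive, and the coordinatewise induction can collapse (a coordinate may become $0$ and the comparability argument fails). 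The standard remedy is a perturbation/approximation argument: replace $A$ by $A_\ep := A + \ep J$ (with $J$ the all-ones matrix), note $A_\ep$ is positive so the theorem applies giving $\lambda_\ep \ge \ln\rho(A_\ep\bar D)$, then let $\ep \to 0^+$ using upper semicontinuity of the top Lyapunov exponent in this monotone setting (it decreases as $\ep$ decreases, and the limit dominates $\lambda$) together with continuity of the spectral radius, $\rho(A_\ep\bar D)\to\rho(A\bar D)$. Verifying that $\lambda_\ep \to$ something $\ge \lambda$ — i.e.\ controlling the limit of the Lyapunov exponents under this perturbation — will be the delicate technical step, requiring a uniform integrable bound from (A2) and a dominated-convergence argument inside \eqref{eq:Kingman2}.
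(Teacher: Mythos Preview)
Your perturbation argument for reducing the general nonnegative case to the positive case is essentially the paper's Section~\ref{section:general_nonnegative}: monotonicity in $\epsilon$ gives $\lambda \le \liminf_{\epsilon\to 0^+}\lambda_\epsilon$, upper semicontinuity from~\eqref{eq:Kingman2} gives $\lambda \ge \limsup_{\epsilon\to 0^+}\lambda_\epsilon$, and continuity of the spectral radius handles the averaged side.

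The gap is in your treatment of the positive case. The proposed coordinatewise induction
\[
S^{(n)}(\omega)v \ \ge \ \Bigl(\prod_{k=0}^{n-1} c(\theta^k\omega)\Bigr)\rho^n\, v
\]
with a scalar $c(\omega)$ satisfying $\int\ln c\,d\PP = 0$ does not close. The tightest one-step factor is $c(\omega)=\min_i (AD(\omega)v)_i/(\rho v_i)$, and the minimum over $i$ destroys exactly the Jensen inequality that would give mean zero. Concretely, take $A=\bigl(\begin{smallmatrix}1 & \epsilon\\ \epsilon & 1\end{smallmatrix}\bigr)$, $\bar D=I$ (so $\rho=1+\epsilon$, $v=(1,1)^{\top}$), and let $d_1,d_2$ be i.i.d.\ taking values $e$ and $1/e$ with probability $\tfrac12$: a direct computation gives $\int\ln c\,d\PP\to -\tfrac12$ as $\epsilon\to 0^{+}$, so your bound degrades to $\lambda\ge\ln\rho-\tfrac12$, far short of the claim. (Incidentally, your first instinct on log-sum-exp was right: for convex $\phi$, Jensen reads $\int\phi(X)\,d\PP\ge\phi(\int X\,d\PP)$, so $\int\ln\sum_j c_j e^{x_j}\,d\PP\ge\ln\sum_j c_j e^{\int x_j\,d\PP}$; you talked yourself out of the correct sign.)

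The paper's proof for positive $A$ runs in the opposite direction. Rather than pushing a \emph{fixed} Perron vector $v$ of $A\bar D$ through the random system, it takes the \emph{random} normalized equivariant direction $w(\omega)$ supplied by~\cite{JMSh2} (Proposition~\ref{prop:principal}), with $AD(\omega)w(\omega)=\rho(\omega)w(\theta\omega)$. The special structure $S(\omega)=AD(\omega)$ makes the column ratios $M_{c,i}/m_{c,i}$ in~\eqref{eq:mM} independent of $\omega$, which forces $\ln w_i(\cdot)$ to be uniformly bounded (Lemma~\ref{lm:integrability}). One then applies the arithmetic--geometric mean inequality to the exact relation $\rho(k)w_i(k+1)=\sum_j a_{ij}d_j(k)w_j(k)$, averages over $k=0,\dots,n-1$, and passes to the limit using Birkhoff: the outcome is a \emph{positive} vector $\bar w$ (the geometric time-average of $w(\theta^k\omega)$) satisfying $A\bar D\,\bar w\le e^{\lambda}\bar w$, whence $\rho(A\bar D)\le e^{\lambda}$ by \cite[Thm.~2.1.11]{BP}. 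In short, the paper manufactures a test vector for the \emph{averaged} matrix out of the random principal direction; your proposal tries the reverse, and that direction does not admit a scalar cocycle with zero mean.
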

The case of the zero spectral radius of $A \bar{D}$ is obvious, so from now on we assume that the spectral radius of $A \bar{D}$ is positive.

\section{$A$ is a positive matrix}
\label{section:positive}
In the present section we give a proof of Theorem~\ref{thm:main} under the additional assumption that $A$ is a positive matrix.  This allows us to apply the theory of random systems of positive matrices as presented in~\cite{JMSh2}.

The top Lyapunov exponent $\lambda$ can now be expressed as the logarithmic growth rate of some distinguished positive vector.  Indeed, the following result holds.
\begin{proposition}
\label{prop:principal}
There exists a measurable mapping $w = (w_1, \dots, w_N) \colon \allowbreak \Omega_0 \to \RR^{N}$, $w(\omega)$ is a positive vector with $\norm{w(\omega)} = 1$ for all $\omega \in \Omega_0$, where $\theta(\Omega_0) = \Omega_0$, $\mathbf{P}(\Omega_0) = 1$, such that
\begin{equation}
\label{eq:invariance}
S(\omega) w(\omega) = \rho(\omega) w(\theta\omega), \quad \forall \, \omega
\in \Omega_0,
\end{equation}
with $\rho(\omega) > 0$, and
\begin{equation*}
\lim\limits_{n \to \infty} \frac{\ln{\norm{S^{(n)}(\omega) w(\omega)}}}{n} = \lambda \quad \forall \, \omega \in \Omega_0.
\end{equation*}
\end{proposition}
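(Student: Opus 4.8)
The plan is to construct $w$ and $\rho$ as the "principal Floquet bundle" / generalized principal eigenvector–eigenvalue pair for the cocycle generated by $S(\omega) = AD(\omega)$, using the theory of random systems of positive matrices developed in \cite{JMSh2}. Since $A$ is a positive matrix and each $D(\omega)$ has strictly positive diagonal entries, the product $S(\omega) = AD(\omega)$ is a positive matrix for every $\omega$, so it maps the nonnegative cone $\RR^N_+ \setminus \{0\}$ strictly into its interior. The integrability hypothesis (A2) together with the form $s_{ij}(\omega) = a_{ij}d_j(\omega)$ guarantees that $\lnplus{\norm{S(\cdot)}} \in L_1\OFP$ (use the bound on $M(\omega)$ in~\eqref{eq:mM}), so Proposition~\ref{prop:top-Lyapunov} applies and the top Lyapunov exponent $\lambda$ is well defined; one should also check the integrability needed on the "reciprocal" side, namely that $\ln$ of the ratio $M_{c,i}(\cdot)/m_{c,i}(\cdot)$-type quantities behaves well, but since $A$ is positive these ratios are bounded by constants depending only on $A$, so no extra integrability is lost. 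I would then cite the relevant existence theorem from \cite{JMSh2}: for a random system of positive matrices satisfying the stated integrability, there is a $\theta$-invariant full-measure set $\Omega_0$ and a measurable unit positive vector field $w(\cdot)$ on $\Omega_0$ together with a measurable positive scalar $\rho(\cdot)$ such that the invariance relation~\eqref{eq:invariance} holds; this $w$ spans the one-dimensional principal (fastest) subbundle.

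The second half of the statement — that the logarithmic growth rate of $S^{(n)}(\omega)w(\omega)$ equals $\lambda$ — follows by iterating~\eqref{eq:invariance}. Applying~\eqref{eq:invariance} successively gives $S^{(n)}(\omega)w(\omega) = \big(\prod_{k=0}^{n-1}\rho(\theta^k\omega)\big)\, w(\theta^n\omega)$, and since $\norm{w(\theta^n\omega)} = 1$, we get
\begin{equation*}
\frac{\ln\norm{S^{(n)}(\omega)w(\omega)}}{n} = \frac{1}{n}\sum_{k=0}^{n-1} \ln\rho(\theta^k\omega).
\end{equation*}
Provided $\ln\rho(\cdot) \in L_1\OFP$ — which is part of what \cite{JMSh2} supplies, again using (A2) and positivity of $A$ to control $\rho(\omega)$ between $\min_{j} a_{j\cdot}\cdot d_\cdot(\omega)$-type lower bounds and $\norm{S(\omega)}$-type upper bounds — Birkhoff's ergodic theorem and ergodicity of $\theta$ give that the right-hand side converges a.e.\ to $\int_\Omega \ln\rho\,d\PP$. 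It then remains to identify this integral with $\lambda$: because $w$ spans the dominated splitting's top direction and positive matrices contract the projective cone metric, any nonnegative vector's growth rate is asymptotically governed by the same exponent, so $\int_\Omega \ln\rho\,d\PP = \lambda$; this identification is exactly the content of the principal-Lyapunov-exponent characterization in \cite{JMSh2}.

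I expect the main obstacle to be purely bookkeeping rather than conceptual: verifying that the hypotheses of the cited theorems from \cite{JMSh2} are met in our setting, specifically the various $L_1$ conditions on $\ln m_{c,i}$, $\ln M_{c,i}$, $\ln M$ and hence on $\ln\rho$. Since $A$ is positive, $\min_{j} a_{ji} > 0$ and $\max_{j} a_{ji} > 0$ are positive constants, so $\ln m_{c,i}(\omega) = \ln d_i(\omega) + \mathrm{const}$ and similarly for $M_{c,i}$; thus all these log-quantities differ from $\ln d_i(\cdot)$ by constants and their integrability reduces to (A2) (for the positive part) together with the observation that the negative part may fail $L_1$ only in the degenerate case $\bar d_i = 0$, which is compatible with $\lambda \in [-\infty,\infty)$. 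I would handle the genuinely degenerate case ($\ln d_i \notin L_1$ for some $i$) by noting that it forces the $i$-th coordinate to be negligible on average and either reduce to a smaller positive system or argue directly that the claimed formula still holds with the convention $\ln 0 = -\infty$; but in the main line of argument, after restricting to $\Omega_0$ where everything is finite, the proof is a direct quotation of \cite{JMSh2} plus the one-line Birkhoff computation above.
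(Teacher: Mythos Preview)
Your proposal is correct and takes essentially the same approach as the paper: cite the relevant results from \cite{JMSh2} and verify their hypotheses using the key observation that the ratio $M_{c,i}(\omega)/m_{c,i}(\omega)$ depends only on $A$ (the factor $d_i(\omega)$ cancels). The paper is simply terser---it pinpoints the single hypothesis $\lnplus(\ln M_{c,i}(\cdot) - \ln m_{c,i}(\cdot)) \in L_1\OFP$ from \cite[Thm.~2.3 and Prop.~3.2(1)]{JMSh2}, notes that this difference is a nonnegative constant, and stops; your separate Birkhoff derivation of the growth-rate half and your concern about the degenerate case $\ln d_i \notin L_1$ are unnecessary, since the cited results deliver the full conclusion and require integrability only of the \emph{difference}, not of the individual $\ln m_{c,i}$, $\ln M_{c,i}$.
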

\begin{proof}
By~\cite[Thm.\ 2.3 and Prop.\ 3.2(1)]{JMSh2}, it suffices to show that $\lnplus(\ln{M_{c,i}(\cdot)} - \ln{m_{c,i}(\cdot)})$ belongs to $L_1\OFP$ for all $1 \le i \le N$.  But
\begin{equation*}
\ln{M_{c,i}(\omega)} - \ln{m_{c,i}(\omega)} = \ln{\max\limits_{1 \le j \le N} a_{ji}} - \ln{\min\limits_{1 \le j \le N} a_{ji}},
\end{equation*}
which is a nonnegative constant.
\end{proof}
$\lambda$ is now referred to as the {\em generalized principal Lyapunov exponent\/} of~\eqref{eq:RMS-D}.

It is straightforward from~\eqref{eq:invariance} that
\begin{equation*}
\lambda = \lim\limits_{n \to \infty} \frac{1}{n} \sum\limits_{k = 0}^{n-1} \ln{\rho(\theta^{k}\omega)}, \quad \omega \in \Omega_0.
\end{equation*}

Before we proceed to the proof of Theorem~\ref{thm:main} in the case of positive $A$ we formulate and prove an auxiliary result which will guarantee that Birkhoff's ergodic theorem can be applied.
\begin{lemma}
\label{lm:integrability}
For each $1 \le i \le N$, the function $\ln{w}_i(\cdot)$ is bounded uniformly on $\Omega_0$.
\end{lemma}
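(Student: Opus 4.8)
The plan is to exploit the invariance relation~\eqref{eq:invariance} together with the strict positivity of $A$ to get an $\omega$-independent two-sided bound on the ratios $w_i(\omega)/w_j(\omega)$, and then combine this with the normalization $\norm{w(\omega)} = 1$. First I would rewrite~\eqref{eq:invariance} coordinatewise: for every $\omega \in \Omega_0$ and every $1 \le i \le N$,
\begin{equation*}
\rho(\omega)\, w_i(\theta\omega) = \sum_{j=1}^{N} s_{ij}(\omega) w_j(\omega) = \sum_{j=1}^{N} a_{ij} d_j(\omega) w_j(\omega).
\end{equation*}
Since all $a_{ij} > 0$ and all $d_j(\omega) > 0$ and $w_j(\omega) > 0$, each term on the right is positive, so in particular $\rho(\omega) w_i(\theta\omega)$ lies between $a_{ij} d_j(\omega) w_j(\omega)$ (dropping all but one term) and $\bigl(\max_{k} a_{ik}\bigr)\sum_j d_j(\omega) w_j(\omega)$. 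Taking the quotient of the bound for index $i$ against the bound for another index $i'$, the common factor $\rho(\omega)$ cancels, and — this is the key point — so does the common factor $\sum_j d_j(\omega) w_j(\omega)$ after one more estimate, because both numerators and denominators are sandwiched by constant multiples of it.

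More precisely, let $\alpha := \min_{i,j} a_{ij} > 0$ and $\beta := \max_{i,j} a_{ij}$. From the displayed identity,
\begin{equation*}
\alpha \sum_{j} d_j(\omega) w_j(\omega) \;\le\; \rho(\omega) w_i(\theta\omega) \;\le\; \beta \sum_{j} d_j(\omega) w_j(\omega)
\end{equation*}
for every $i$. Dividing the inequality for index $i$ by the inequality for index $i'$ gives
\begin{equation*}
\frac{w_i(\theta\omega)}{w_{i'}(\theta\omega)} \;\le\; \frac{\beta}{\alpha}
\end{equation*}
for all $i, i'$ and all $\omega \in \Omega_0$; since $\theta(\Omega_0) = \Omega_0$, the same bound holds with $\theta\omega$ replaced by an arbitrary point of $\Omega_0$. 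Thus $\max_i w_i(\omega) \le (\beta/\alpha) \min_i w_i(\omega)$ on $\Omega_0$. Combining this with $\norm{w(\omega)} = 1$, which forces $1/\sqrt{N} \le \max_i w_i(\omega) \le 1$ and hence $\min_i w_i(\omega) \ge \alpha/(\beta\sqrt{N})$, as well as $\max_i w_i(\omega) \le 1$, we obtain constants $0 < c \le C < \infty$ depending only on $N$ and on the entries of $A$ such that $c \le w_i(\omega) \le C$ for all $i$ and all $\omega \in \Omega_0$. Taking logarithms yields the claimed uniform bound on $\ln w_i(\cdot)$.

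I expect no serious obstacle here; the only thing to be careful about is that the cancellation of $\rho(\omega)$ and of the sum $\sum_j d_j(\omega) w_j(\omega)$ is legitimate precisely because $A$ is \emph{strictly} positive — it is exactly this hypothesis (unavailable in the general nonnegative case of Section~\ref{section:general_nonnegative}) that makes the two-sided constant bound possible, and the argument would break down, e.g., if some column of $A$ were zero. One should also note that $w(\omega)$ being a genuine positive vector (not merely nonnegative) for every $\omega \in \Omega_0$ is guaranteed by Proposition~\ref{prop:principal}, so all the logarithms and quotients above are well defined.
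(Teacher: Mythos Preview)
Your argument is correct and follows essentially the same route as the paper's proof: both use the invariance relation~\eqref{eq:invariance} together with the strict positivity of $A$ to show that all coordinates $w_i(\omega)$ are within a fixed constant factor of one another (the paper uses the column-wise constants $m_{c,j}$, $M_{c,j}$ and $\kappa = \max_i \frac{\max_j a_{ji}}{\min_j a_{ji}}$, whereas you use the cruder global $\alpha$, $\beta$), and then combine this with the normalization $\norm{w(\omega)}=1$ to turn the ratio bound into a two-sided absolute bound. The only cosmetic difference is that the paper applies the relation at $\theta^{-1}\omega$ to estimate $w(\omega)$ directly, while you apply it at $\omega$ and then invoke $\theta(\Omega_0)=\Omega_0$ to cover all of $\Omega_0$.
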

\begin{proof}
We apply estimates used in the proof of~\cite[Prop.\ 3.2(1)]{JMSh2}.  Fix $\omega \in \Omega_0$.  Observe that $w(\omega)$ equals $\frac{S(\theta^{-1}\omega) w(\theta^{-1}\omega)}{\rho(\theta^{-1}\omega)}$ with $w(\theta^{-1}\omega)$ a positive vector and $\rho(\theta^{-1}\omega) > 0$, and denote $u(\omega) = (u_1(\omega), \ldots, u_{N}(\omega)) := w(\theta^{-1}\omega)/\rho(\theta^{-1}\omega)$. We have, for $1 \le i \le N$,
\begin{equation*}
\sum\limits_{j = 1}^{N} m_{c,j}(\theta^{-1}\omega) u_j(\omega) \le w_i(\omega) \le \sum\limits_{j = 1}^{N} M_{c,j}(\theta^{-1}\omega) u_j(\omega).
\end{equation*}
When we put
\begin{equation*}
\gamma(\omega) := \sum\limits_{j = 1}^{N} m_{c,j}(\theta^{-1}\omega) u_j(\omega),
\end{equation*}
and
\begin{equation*}
\kappa := \max\limits_{1 \le i \le N}\frac{M_{c,i}(\theta^{-1}\omega)}{m_{c,i}(\theta^{-1}\omega)} \stackrel{\eqref{eq:mM}}{=} \max\limits_{1 \le i \le N} \frac{\max\limits_{1 \le j \le N} a_{ji}}{\min\limits_{1 \le j \le N} a_{ji}},
\end{equation*}
we obtain
\begin{equation*}
\gamma(\omega) \le w_i(\omega) \le \kappa \gamma(\omega).
\end{equation*}
Since $\norm{w(\omega)} = 1$, we have
\begin{equation*}
1 \le N \kappa^2 \gamma^2(\omega),
\end{equation*}
from which it follows that
\begin{equation*}
\ln{w_i(\omega)} \ge \ln{\gamma(\omega)} \ge - \tfrac{1}{2} \ln{N} - \ln{\kappa}.
\end{equation*}
As $\ln{w_i(\omega)} \le 0$, the assertion follows.
\end{proof}
\begin{proof}[Proof of Theorem~\ref{thm:main} for positive $A$]
The functions $\ln{w_i(\cdot)}$ belong to $L_1\OFP$, by Lemma \ref{lm:integrability}, and   the functions $\lnplus{d_i(\cdot)}$ belong to $L_1\OFP$, by (A2).  Birkhoff's ergodic theorem guarantees that for a.e.\ $\omega \in \Omega_0$ the equalities
\begin{gather*}
\lim\limits_{n \to \infty} \frac{1}{n}\sum\limits_{k=0}^{n-1} \ln{w_i(\theta^{k} \omega)} = \int \ln{w_i(\cdot)} \, d\mathbf{P}(\cdot) \ ( {} > -\infty)
\\
\lim\limits_{n \to \infty} \frac{1}{n}\sum\limits_{k=0}^{n-1} \ln{d_i(\theta^{k} \omega)} = \int \ln{d_i(\cdot)} \, d\mathbf{P}(\cdot) \ ( {} = \ln{\bar{d}_i})
\end{gather*}
hold for all $1 \le i \le N$.  Fix such $\omega$, and put $w(n) := w(\theta^{n}\omega)$, $\rho(n) := \rho(\theta^{n}\omega)$, $n = 0, 1, 2, \dots$.

We have
\begin{equation}
\label{eq:auxiliary}
w_{i}(n + 1) = \frac{1}{\rho(n)} \sum\limits_{j=1}^{N} a_{ij} d_{j}(n) w_j(n).
\end{equation}
Put
\begin{equation*}
\tilde{w}_{i}(n) := \exp\Bigl( \frac{1}{n} \sum\limits_{k=0}^{n-1} \ln{w_i(k)}\Bigr), \quad \hat{w}_{i}(n) := \exp\Bigl( \frac{1}{n} \sum\limits_{k=1}^{n} \ln{w_i(k)}\Bigr).
\end{equation*}
Further, let
\begin{equation*}
\tilde{d}_{i}(n) := \exp\Bigl( \frac{1}{n} \sum\limits_{k=0}^{n-1} \ln{d_i(k)}\Bigr), \quad \tilde{\rho}(n) := \exp\Bigl( \frac{1}{n} \sum\limits_{k=0}^{n-1} \ln{\rho(k)}\Bigr).
\end{equation*}

For any $i, j$ an application of the geometric\nobreakdash-\hspace{0pt}arithmetic mean
inequality gives that
\begin{equation*}
\exp\biggl( \frac{1}{n} \sum\limits_{k=0}^{n-1} \ln\Bigl( \frac{1}{\rho(k)} d_j(k) \frac{w_{j}(k)}{w_{i}(k+1)} \Bigr) \biggl) \le \frac{1}{n} \sum\limits_{k=0}^{n-1} \frac{1}{\rho(k)} d_j(k) \frac{w_{j}(k)}{w_{i}(k+1)}.
\end{equation*}
For each $i$, by multiplying the above inequality by $a_{ij}$, $1 \le j \le N$, and adding the resulting inequalities one obtains, after some calculation, that
\begin{multline*}
\frac{1}{\exp\biggl( \frac{1}{n} \sum\limits_{k=0}^{n-1} \ln{\rho(k)} \biggr)} \sum\limits_{j=1}^{N} a_{ij} \exp\biggl( \frac{1}{n} \sum\limits_{k=0}^{n-1} \ln{d_j(k)} \biggr) \frac{\tilde{w}_j(n)}{\hat{w}_i(n)}
\\
\le \frac{1}{n} \sum\limits_{k=0}^{n-1} \frac{1}{\rho(k)} d_j(k) \frac{\sum\limits_{j=1}^{N} a_{ij} w_j(k)}{w_i(k)} \stackrel{\eqref{eq:auxiliary}}{=} 1,
\end{multline*}
that is,
\begin{equation*}
\sum\limits_{j=1}^{N} a_{ij} \tilde{d}_j(n) \tilde{w}_j(n) \le \tilde{\rho}(n) \hat{w}_i(n).
\end{equation*}
Since
\begin{equation*}
\hat{w}_i(n) = \tilde{w}_i(n) \exp\biggl( \frac{1}{n} \ln{\frac{w_i(n)}{w_i(0)}} \biggr),
\end{equation*}
it follows that $\lim\limits_{n\to\infty} \hat{w}_i(n) =\bar{w}_i$ for all $1 \le i \le N$.

We have therefore found a positive vector $\bar{w}$ such that
\begin{equation*}
A \bar{D} \bar{w} \le e^{\lambda} \bar{w},
\end{equation*}
where the inequality is meant to hold coordinatewise. By~\cite[Thm.\ 2.1.11]{BP}, the spectral radius of $A\bar{D}$ does not exceed $e^{\lambda}$, which concludes the proof.
\end{proof}

\begin{remark}
\label{rm:Arnold_et_al}
If we assume additionally that $\lnminus{\min\limits_{i}d_{i}(\cdot)}$ belongs to $L_1\OFP$, then we can use \cite[Thm.\ 3.1]{ArnGundDem} or~\cite[Thm.\ 3.1(3)]{JMSh2} to obtain Proposition~\ref{prop:principal}.
\end{remark}

\section{$A$ is a general nonnegative matrix}
\label{section:general_nonnegative}
Denote
\begin{equation*}
B =
\begin{pmatrix}
1 & \dots & 1
\\
\vdots & \ddots & \vdots
\\
1 & \dots & 1
\end{pmatrix}.
\end{equation*}
For any $\epsilon > 0$ denote by $\lambda_{\epsilon}$ the top Lyapunov exponent of system~\eqref{eq:RMS-D} with $A$ replaced by $A + {\epsilon}B$, that is, of the system
\begin{equation*}
\{ (A + {\epsilon}B) D(\omega) \}_{\omega \in \Omega}.
\end{equation*}
The fact that $\lambda$ equals the limit, as $\epsilon \to 0^{+}$, of $\lambda_{\epsilon}$ is a consequence, for instance, of~\cite[Thm.\ 1]{Fr-GT-Q}.   We will give, however, a much more direct proof here.

It is a standard exercise that for a nonnegative $N$ by $N$ matrix $C$ there holds
\begin{equation*}
\norm{C} = \sup\{\,\norm{Cu}: u = (u_1, \dots, u_N), \ u_i \ge 0, \ \norm{u} = 1 \,\}
\end{equation*}
(cf., e.g., \cite[Lemma 3.1.1]{JMSh-mono}).  Consequently, for any $0 < \epsilon_1 \le \epsilon_2$ we have $\norm{S^{(n)}(\omega)} \le \norm{S_{\epsilon_1}^{(n)}(\omega)} \le \norm{S_{\epsilon_2}^{(n)}(\omega)}$, and, as a result,
\begin{equation*}
\lambda \le \liminf\limits_{\epsilon \to 0^{+}}\lambda_{\epsilon}.
\end{equation*}
On the other hand, it follows from~\eqref{eq:Kingman2} that the top Lyapunov exponent is upper semicontinuous, in~particular, $\lambda \ge \limsup\limits_{\epsilon \to 0^{+}}\lambda_{\epsilon}$.  Therefore
\begin{equation*}
\lambda = \lim\limits_{\epsilon \to 0^{+}}\lambda_{\epsilon}.
\end{equation*}
An analogous reasoning can be repeated for averaged matrices.  Thus we obtain the desired result.

\section*{Concluding remarks}

Analogs of Theorem~\ref{thm:main} for some systems of differential equations have been known for some time.  To the author's knowledge, the first result giving the lower estimate of the principal Lyapunov exponent in~terms of the ``principal Lyapunov exponent'' of the time-averaged system was proved for almost periodic linear parabolic PDEs of second order in~\cite{HuShVi}.

Since that time, many further results of that kind have appeared in the literature.  Their common feature seems to be that the interactions between ``components'' are time\nobreakdash-\hspace{0pt}independent.  For more, see a survey paper~\cite{Mi-surv}.

\section*{Acknowledgements}
I would like to thank the referee for helpful comments.

\end{document}